   \def\dZ{{\mathbb Z}}
   \def\cH{{\mathcal H}}
\def\bm\chi{\mbox{\boldmath$\chi$}}
\let\xker=\ker \def\ker{{\xker\,}}
\newtheorem{theorem}{Theorem}[section]
\theoremstyle{remark}
\numberwithin{equation}{section}
\author{Alexander I. Aptekarev}
\address{
Alexander I. Aptekarev\\
Keldysh Institute for Applied Mathematics\\
Russian Academy of Sciences\\
Miusskaya pl. 4\\
125047 Moscow, RUSSIA}
\author{Maxim Derevyagin}
\address{
Maxim Derevyagin\\
University of Mississippi\\
Department of Mathematics\\
Hume Hall 305 \\
P. O. Box 1848 \\
University, MS 38677-1848, USA }
\email{derevyagin.m@gmail.com}
\author{Walter Van Assche}
\address{
Walter Van Assche\\
KU Leuven\\
Department of Mathematics\\
Celestijnenlaan 200B box 2400\\
BE-3001 Leuven, BELGIUM}
\date{\today}
 \subjclass{Primary 39A70, 42C05; Secondary 47B36, 47B37, 47B39, 82C20.}
\keywords{Multiple orthogonal polynomials, discrete electromagnetic Schr\"odinger operator, difference operator, operators on lattices, discrete integrable system}
\begin{document}

\title[2D discrete Schr\"odinger operators]{On 2D discrete Schr\"odinger operators associated with multiple orthogonal polynomials}

\begin{abstract}
A class of cross-shaped difference operators on a two dimensional lattice is introduced.
The main feature of the operators in this class is that their formal eigenvectors consist of multiple orthogonal polynomials. In other words, this scheme generalizes the classical connection between Jacobi matrices and orthogonal polynomials to the case of operators on lattices. Furthermore we also show how to obtain 2D discrete Schr\"odinger operators out of this construction and give a number of explicit examples based on known families of multiple orthogonal polynomials.
\end{abstract}

\maketitle

\section{Introduction}

In this paper we introduce a class of cross-shaped difference operators acting on the lattice $\dZ^2_+$, where $\dZ_+=\{0,1,2,\dots\}$. Cross-shaped difference operators on lattices appear in many instances where some discrete systems are analysed. In particular, cross-shaped
difference operators with periodic coefficients were studied in \cite{K85} and \cite{OblPen}. To be more specific, the operators we are dealing with have the form
\begin{equation}\label{2DSchIntr}
(\widetilde{\Delta} f)_{n,m}=f_{n+1,m}+f_{n,m+1}+q_{n,m}f_{n,m}+a_{n,m}f_{n-1,m}+b_{n,m}f_{n,m-1}.
\end{equation}
This operator reflects the 2D interaction of the nearest neighbours on $\dZ^2$:
\begin{center}
\begin{tikzpicture}
  \tikzstyle{every node}=[draw,shape=circle];
  \node (v0) at (0:0) {$\bullet$};
  \node (v1) at (   0:1) {$\bullet$};
  \node (v2) at (  0:-1) {$\bullet$};
  \node (v3) at (90:1) {$\bullet$};
  \node (v4) at (3*90:1) {$\bullet$};
  \draw (v0) -- (v1)
        (v0) -- (v2)
        (v0) -- (v3)
        (v0) -- (v4);
\end{tikzpicture}
\end{center}
What we do can be considered as a generalization of
the classical connection between {\it  Jacobi matrices} and {\it
orthogonal polynomials} \cite{Ach1961}. Recall that a  Jacobi matrix $J$ is a
difference operator of the form
\begin{equation}\label{Jacobi}
(Jf)_{n}=\sqrt{a_{n+1}}f_{n+1}+q_{n}f_{n}+\sqrt{a_{n}}f_{n-1},
\end{equation}
which describes the 1D interaction of the nearest neighbours on $\dZ$:
\begin{center}
\begin{tikzpicture}
  \tikzstyle{every node}=[draw,shape=circle];
  \node (v0) at (0:0) {$\bullet$};
  \node (v1) at (   0:1) {$\bullet$};
  \node (v2) at (  0:-1) {$\bullet$};
  \draw (v0) -- (v1)
        (v0) -- (v2);
\end{tikzpicture}
\end{center}
However, one should not think that the generalization in question is obvious and straightforward.  Unlike the classical case of Jacobi matrices,
it is not clear whether the corresponding eigenvalue problem
 \begin{equation}\label{2DSchEPIntr}
\widetilde{\Delta} \xi(z)=z\xi(z)
\end{equation}
has a solution and especially whether the entries of $\xi$ can be chosen to be
polynomials in the spectral variable $z$. To the best of our
knowledge, there were only a couple of operators on lattices with polynomial
eigenvectors known before and the main goal of this paper is to present a
rather general method to produce such operators. To this end, we construct
cross-shaped difference operators $\widetilde{\Delta}$ using multiple
orthogonal polynomials, which are known to play a prominent role in the theory
of random matrices  \cite{K2010}, \cite{AptKu}, \cite{Ken}. Thus, the operators
$\widetilde{\Delta}$ that we obtain in this way have multiple orthogonal
polynomials as the entries of their eigenvectors. In order to
guarantee the existence of a polynomial solution to \eqref{2DSchEPIntr} we only
consider special families of coefficients $q_{n,m}$, $a_{n,m}$, $b_{n,m}$ that
give rise to a {\it discrete zero curvature condition}. This means that there
is a {\it discrete integrable system} behind the scene \cite{ADvanA2014}.

It is evident that the difference expression \eqref{2DSchIntr}
defining $\widetilde{\Delta}$ is not symmetric. Nevertheless, in
some cases the operator $\widetilde{\Delta}$ can be symmetrized
like in the classical case of the Jacobi operator \eqref{Jacobi}. In
this case the cross-shaped difference operators are a subclass of a
wide class of operators known in the literature as discrete
electromagnetic Schr\"odinger operators (\cite{RabRoch},
\cite{Shubin}). {\it Discrete electromagnetic
Schr\"{o}\-dinger operators} are operators defined on the lattice
$\dZ^2$ that have the form
\begin{equation} \label{Electro}
\widetilde{\Delta}_s u = \sum_{k=1}^2 \frac{1}{2m_k} (V_{e_k}-a_k I) (V_{-e_k}
- \bar{a}_k I) u + \Phi u,
\end{equation}
where $u$ is a function defined on $\dZ^2$, $(V_{e_k}u)(x) := u(x-e_k)$ and $(V_{-e_k}u)(x) := u(x+e_k)$ are the shift operators by $e_1=(1,0)$ and
$e_2=(0,1)$, respectively, $m_k$ is the mass of the
$k$th particle, and $a_1$, $a_2$ and $\Phi$ are
bounded complex-valued functions on $\mathbb{Z}^2$. The vector-valued
function $a=(a_1, a_2)$ can be considered as an analogue
of the magnetic potential, whereas $\Phi$ is the discrete analogue of
the electric potential. If $\Phi$ and $a=(a_1, a_2)$ are real-valued, then $\widetilde{\Delta}_s$ is a
selfadjoint operator on the Hilbert space $\ell^2 (\mathbb{Z}^2)$ and it can be rewritten in the form
\begin{align*}
-\widetilde{\Delta}_s u(x)=&\frac{a_1}{2m_1}(u(x+e_1)+u(x-e_1))
+\frac{a_2}{2m_2}(u(x+e_2)+u(x-e_2))\\
&+\left(\frac{1+a_1^2}{2m_1}+\frac{1+a_2^2}{2m_2}+2\Phi(x)\right)u(x).
\end{align*}
Actually, in this paper we consider a class of operators that are more general than
the operator \eqref{Electro}. Namely, the operators we consider here are of the form
\begin{align*}
(\Delta_s f)_{n,m}=&\sqrt{\frac{a_{n+1,m}}{2}}f_{n+1,m}+\sqrt{\frac{b_{n,m+1}}{2}}f_{n,m+1}+\frac{c_{n,m}+d_{n,m}}{2}f_{n,m}\\
&+\sqrt{\frac{a_{n,m}}{2}}f_{n-1,m}+\sqrt{\frac{b_{n,m}}{2}}f_{n,m-1}.
\end{align*}
Here one has to put
\begin{align*}
f_{n,m}=u(x), \quad &f_{n+1,m}=u(x+e_1), \quad f_{n-1,m}=u(x-e_1),\\
&f_{n,m+1}=u(x+e_2), \quad f_{n,m-1}=u(x-e_2),
\end{align*}
to see the relation of our operators to the discrete electromagnetic
Schr\"{o}\-dinger operators \eqref{Electro}.

It is also worth mentioning that operators of the form \eqref{Electro}
describe the so-called tight binding model in solid state physics (see \cite{Mattis}, \cite{Mogilner} and the references given there), which plays a prominent role in the
theory of propagation of spin waves and of waves in quasi-crystals
\cite{Teschl,SR3}, in the theory of nonlinear integrable lattices
\cite{Teschl,Deift}, and in other places.
Furthermore such symmetric
difference operators appear in quantum-state transfer problems \cite{Miki} and quantum computation~\cite{Kit2010}.

On the one hand, our construction produces many concrete examples of cross-shaped difference operators, which can be obtained from some explicitly known families of
multiple orthogonal polynomials. For instance, these examples can serve for constructing certain Hamiltonians in quantum-state transfer problems and other related physical problems.  On the other hand, the eigenvectors of the operators in question consist of multiple orthogonal polynomials, whose asymptotic properties are well understood, and thus we have a very powerful tool for spectral analysis of the underlying operators.

\bigskip

\noindent{\bf Acknowledgements.} A.I. Aptekarev was supported by the Russain Science Foundation (project 14-21-00025. M. Derevyagin thanks the hospitality of the Department of Mathematics of KU Leuven, where his part of the research was mainly done while
he was a postdoc there. M. Derevyagin and W. Van Assche gratefully acknowledge the support of FWO Flanders project G.0934.13, KU Leuven research grant OT/12/073 and the Belgian Interuniversity Attraction Pole P07/18.

\section{Multiple orthogonal polynomials}

Here we briefly review a generalization of orthogonal polynomials to the case when we have two measures and we want our
polynomials to be simultaneously orthogonal with respect to the given measures, see \cite{Apt}, \cite[Chapter 23]{Ismail}, \cite{vanA1999}.

Given two positive measures $\mu_1$, $\mu_2$ on the real line, let us consider  the multi-index $(n,m) \in\dZ^2_+$. The
type II multiple orthogonal polynomial is the monic polynomial $P_{n,m}(x) = x^{n+m} + \cdots$
of degree $n+m$ such that the following orthogonality relations are satisfied:
\begin{eqnarray} \label{eq:1.9}
    \int P_{n,m}(x) x^j\, d\mu_1(x) &=& 0, \qquad j=0,1,\ldots,n-1, \nonumber \\
    \int P_{n,m}(x) x^j\, d\mu_2(x) &=& 0, \qquad j=0,1,\ldots,m-1. \nonumber
\end{eqnarray}
Introducing the moments
\[
s_j^{(i)} = \int x^j d\mu_i(x),\quad i=1,2,
\]
and the determinant of the moment matrix
\begin{equation} \label{eq:2.12}
   S_{n,m} = \left| \begin{matrix}
                   s_0^{(1)} & s_1^{(1)} & \cdots & s_{n-1}^{(1)} \\
                   s_1^{(1)} & s_2^{(1)} & \cdots & s_{n}^{(1)} \\
                \vdots & \vdots & \cdots & \vdots \\
                   s_{n+m-1}^{(1)} & s_{n+m}^{(1)} & \cdots & s_{2n+m-2}^{(1)} \end{matrix} \
           \begin{matrix}
                   s_0^{(2)} & s_1^{(2)} & \cdots & s_{m-1}^{(2)} \\
                   s_1^{(2)} & s_2^{(2)} & \cdots & s_{m}^{(2)} \\
                \vdots & \vdots & \cdots & \vdots \\
                   s_{n+m-1}^{(2)} & s_{n+m}^{(2)} & \cdots & s_{n+2m-2}^{(2)}
                  \end{matrix} \right|,
\end{equation}
one sees that the type II multiple orthogonal polynomial can be written as
\small
\[    P_{n,m}(x) = \frac{1}{S_{n,m}}
          \left|\begin{matrix}
                   s_0^{(1)} & s_1^{(1)} & \cdots & s_{n-1}^{(1)} \\
                   s_1^{(1)} & s_2^{(1)} & \cdots & s_{n}^{(1)} \\
                \vdots & \vdots & \cdots & \vdots \\
                   s_{n+m}^{(1)} & s_{n+m+1}^{(1)} & \cdots & s_{2n+m-1}^{(1)} \end{matrix} \
           \begin{matrix}
                   s_0^{(2)} & s_1^{(2)} & \cdots & s_{m-1}^{(2)} \\
                   s_1^{(2)} & s_2^{(2)} & \cdots & s_{m}^{(2)} \\
                \vdots & \vdots & \cdots & \vdots \\
                   s_{n+m}^{(2)} & s_{n+m+1}^{(2)} & \cdots & s_{n+2m-1}^{(2)}
                  \end{matrix}
          \begin{matrix} 1 \\ x \\ \vdots \\ x^{n+m} \end{matrix} \right|
\]
\normalsize
provided that $S_{n,m}$ is nonvanishing. In the latter case we say that the index $(n,m)$ is normal.
In this paper we always assume that
all multi-indices are normal and we investigate the nearest-neighbor recurrence relations.

\begin{theorem}[\cite{vanA2011}] \label{thm:1}
Suppose all multi-indices $(n,m) \in\dZ_+^2$ are normal.
Then the type II multiple orthogonal polynomials satisfy the system of recurrence
relations
\begin{align}
    P_{n+1,m}(x) &= (x-c_{n,m})P_{n,m}(x) - a_{n,m} P_{n-1,m}(x) - b_{n,m} P_{n,m-1}(x), \label{eq:2.6} \\
    P_{n,m+1}(x) &= (x-d_{n,m})P_{n,m}(x) - a_{n,m} P_{n-1,m}(x) - b_{n,m} P_{n,m-1}(x), \label{eq:2.7}
\end{align}
where the coefficients obey the following conditions
\begin{equation}\label{BVC_dis}
a_{0,m}=b_{n,0}=0,\, a_{n,0}>0, \, b_{0,m}>0, \quad n,m > 0.
\end{equation}
\end{theorem}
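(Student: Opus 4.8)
The plan is to prove \eqref{eq:2.6} in full; relation \eqref{eq:2.7} then comes out of the same construction applied to $P_{n,m+1}$, after which a one-line computation forces the two recurrences to share the coefficients $a_{n,m}$ and $b_{n,m}$. Write $\langle p,q\rangle_i:=\int p(x)q(x)\,d\mu_i(x)$ for $i=1,2$ and let $\Pi_k$ denote the polynomials of degree at most $k$. The engine of the argument is the observation that, after transposing and reordering its rows and columns, the $(n+m)\times(n+m)$ matrix of the linear map
\[
\Pi_{n+m-1}\ni p\longmapsto\bigl(\langle p,1\rangle_1,\dots,\langle p,x^{n-1}\rangle_1,\ \langle p,1\rangle_2,\dots,\langle p,x^{m-1}\rangle_2\bigr)
\]
is exactly the moment matrix in \eqref{eq:2.12}, hence has determinant $\pm S_{n,m}$. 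By the standing normality hypothesis this map is therefore a bijection; equivalently, the only $p\in\Pi_{n+m-1}$ with $\langle p,x^j\rangle_1=0$ for $0\le j\le n-1$ and $\langle p,x^j\rangle_2=0$ for $0\le j\le m-1$ is $p\equiv 0$. Applying this to $P_{n-1,m}$, which is monic, hence nonzero, and already obeys all of these relations except possibly the last $\mu_1$-one, gives $\langle P_{n-1,m},x^{n-1}\rangle_1\neq0$; symmetrically $\langle P_{n,m-1},x^{m-1}\rangle_2\neq0$.

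Assume first $n,m\ge1$. I would begin from $P_{n+1,m}(x)-xP_{n,m}(x)\in\Pi_{n+m}$ (the degree-$(n+m+1)$ terms cancel) and let $c_{n,m}$ be the coefficient of $x^{n+m}$ in $xP_{n,m}-P_{n+1,m}$, so that $R:=P_{n+1,m}(x)-(x-c_{n,m})P_{n,m}(x)\in\Pi_{n+m-1}$. From $\langle P_{n+1,m},x^j\rangle_1=0$ for $j\le n$ and $\langle P_{n,m},x^j\rangle_1=0$ for $j\le n-1$ one checks that $\langle R,x^j\rangle_1=0$ for $j\le n-2$ and $\langle R,x^{n-1}\rangle_1=-\langle P_{n,m},x^n\rangle_1$, and symmetrically $\langle R,x^j\rangle_2=0$ for $j\le m-2$ and $\langle R,x^{m-1}\rangle_2=-\langle P_{n,m},x^m\rangle_2$. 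Next I would set
\[
a_{n,m}:=\frac{\langle P_{n,m},x^n\rangle_1}{\langle P_{n-1,m},x^{n-1}\rangle_1},\qquad
b_{n,m}:=\frac{\langle P_{n,m},x^m\rangle_2}{\langle P_{n,m-1},x^{m-1}\rangle_2}
\]
(the denominators are nonzero by the first paragraph) and form $T:=R+a_{n,m}P_{n-1,m}+b_{n,m}P_{n,m-1}\in\Pi_{n+m-1}$. Since $P_{n,m-1}$ is $\mu_1$-orthogonal to $1,\dots,x^{n-1}$ and $P_{n-1,m}$ is $\mu_2$-orthogonal to $1,\dots,x^{m-1}$, testing $T$ against $x^j\,d\mu_1$ for $j\le n-1$ and against $x^j\,d\mu_2$ for $j\le m-1$ leaves only the contributions that the choice of $a_{n,m},b_{n,m}$ was arranged to kill; thus $T$ is $\mu_1$-orthogonal to $1,\dots,x^{n-1}$ and $\mu_2$-orthogonal to $1,\dots,x^{m-1}$. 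By the uniqueness statement $T\equiv0$, and this is \eqref{eq:2.6}.

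For \eqref{eq:2.7} I would run the same construction on $P_{n,m+1}$: $d_{n,m}$ is the coefficient of $x^{n+m}$ in $xP_{n,m}-P_{n,m+1}$, the polynomial $R':=P_{n,m+1}-(x-d_{n,m})P_{n,m}$ has the same orthogonality profile as $R$, and the coefficients $a'_{n,m},b'_{n,m}$ that cancel its defects are forced. The equalities $a'_{n,m}=a_{n,m}$, $b'_{n,m}=b_{n,m}$ reduce to $\langle R',x^{n-1}\rangle_1=-\langle P_{n,m},x^n\rangle_1$ and $\langle R',x^{m-1}\rangle_2=-\langle P_{n,m},x^m\rangle_2$, which hold because $\langle P_{n,m+1},x^{n-1}\rangle_1=0$ and $\langle P_{n,m+1},x^{m-1}\rangle_2=0$ by orthogonality --- the very values that came out for $R$. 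Finally, the boundary behaviour \eqref{BVC_dis}: when $n=0$ the $\mu_1$-relations above are vacuous, $P_{-1,m}$ never appears, and the argument produces $P_{1,m}=(x-c_{0,m})P_{0,m}-b_{0,m}P_{0,m-1}$ with $a_{0,m}=0$ and $b_{0,m}=\langle P_{0,m},x^m\rangle_2/\langle P_{0,m-1},x^{m-1}\rangle_2=\|P_{0,m}\|_{\mu_2}^2/\|P_{0,m-1}\|_{\mu_2}^2>0$, because the $P_{0,k}$ are the monic orthogonal polynomials of the positive measure $\mu_2$; the case $m=0$ is symmetric, giving $b_{n,0}=0$ and $a_{n,0}=\|P_{n,0}\|_{\mu_1}^2/\|P_{n-1,0}\|_{\mu_1}^2>0$. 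The one step carrying genuine content is the uniqueness statement --- identifying the relevant matrix with the moment matrix \eqref{eq:2.12} so that normality of every multi-index simultaneously yields the vanishing conclusion and the non-vanishing of the denominators above; everything else is bookkeeping with the two graded families of orthogonality relations.
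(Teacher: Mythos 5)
Your argument is correct and complete: the identification of the pairing matrix of $\Pi_{n+m-1}\ni p\mapsto(\langle p,x^j\rangle_1,\langle p,x^j\rangle_2)$ with (the transpose of) the moment matrix \eqref{eq:2.12} gives exactly the uniqueness statement you need, the defect computation for $R$ and $R'$ is right, and the observation that $R$ and $R'$ have the \emph{same} defects $-\langle P_{n,m},x^n\rangle_1$ and $-\langle P_{n,m},x^m\rangle_2$ is precisely what forces the coefficients $a_{n,m},b_{n,m}$ to be shared by \eqref{eq:2.6} and \eqref{eq:2.7}; the boundary statements in \eqref{BVC_dis}, including the positivity of $a_{n,0}$ and $b_{0,m}$ via the norms of the ordinary monic orthogonal polynomials of $\mu_1$ and $\mu_2$, also come out correctly. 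Note, however, that the paper itself gives no proof of this theorem: it is quoted from Van Assche's paper cited as the source, so there is no in-text argument to compare with. The proof in that reference proceeds differently, expanding $xP_{n,m}$ and exploiting the biorthogonality between type II polynomials and the type I functions, which yields the recurrence coefficients as explicit integrals against the type I vectors; your route avoids type I objects altogether and rests only on the nonsingularity of the moment matrices guaranteed by normality, which makes it more elementary and self-contained, at the price of giving the coefficients only as ratios of the moments $\langle P_{n,m},x^n\rangle_1$, $\langle P_{n,m},x^m\rangle_2$ rather than in the integral form that is useful elsewhere in that theory.
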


Note that for \eqref{eq:2.6}, \eqref{eq:2.7} to hold these relations must
be consistent. As was shown in \cite{ADvanA2014}, this consistency can be expressed as
a discrete zero curvature condition, which also takes the following form.

\begin{theorem}[\cite{vanA2011, ADvanA2014}] \label{thm1}
Suppose that all the indices $(n,m) \in \mathbb{Z}^2_+$ are normal.
The recurrence coefficients in the recurrence relations \eqref{eq:2.6}--\eqref{eq:2.7} for
type II multiple orthogonal polynomials satisfy the following equations
\begin{align}
    c_{n,m+1}=&c_{n,m}\,+\,\displaystyle\frac{(a+b)_{n+1,m}\,-\,(a+b)_{n,m+1}}{(c-d)_{n,m}},   \label{eq:3.1}  \\
    d_{n,m+1}=&d_{n,m}\,+\,\displaystyle\frac{(a+b)_{n+1,m}\,-\,(a+b)_{n,m+1}}{(c-d)_{n,m}},
  \qquad\qquad \label{eq:3.2} \\
    \frac{a_{n,m+1}}{a_{n,m}} =& \frac{c_{n,m}-d_{n,m}}{c_{n-1,m}-d_{n-1,m}},  \label{eq:3.3} \\
    \frac{b_{n+1,m}}{b_{n,m}} =& \frac{c_{n,m}-d_{n,m}}{c_{n,m-1}-d_{n,m-1}}.  \label{eq:3.4}
\end{align}
\end{theorem}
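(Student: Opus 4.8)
The plan is to obtain \eqref{eq:3.1}--\eqref{eq:3.4} as the consistency (compatibility) conditions of the two nearest‑neighbour recurrences of Theorem~\ref{thm:1}: the polynomial $P_{n+1,m+1}$ can be reached from $P_{n,m}$ either by raising first the first index and then the second, or in the opposite order, and forcing the two resulting expansions to coincide will produce the stated identities. The preliminary step is to subtract \eqref{eq:2.7} from \eqref{eq:2.6}, which gives the auxiliary identity
\begin{equation}\label{eq:aux}
 P_{n,m+1}(x)=P_{n+1,m}(x)+(c_{n,m}-d_{n,m})P_{n,m}(x),
\end{equation}
valid for every $(n,m)$; it exchanges an $e_2$-step for an $e_1$-step at the cost of a lower-order term, and it introduces the quantity $c_{n,m}-d_{n,m}$ that governs the denominators in \eqref{eq:3.1}--\eqref{eq:3.4}.

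Next I would write $P_{n+1,m+1}$ in two ways. Applying \eqref{eq:2.7} at the index $(n+1,m)$ gives
\[
 P_{n+1,m+1}=(x-d_{n+1,m})P_{n+1,m}-a_{n+1,m}P_{n,m}-b_{n+1,m}P_{n+1,m-1},
\]
while applying \eqref{eq:2.6} at the index $(n,m+1)$ gives
\[
 P_{n+1,m+1}=(x-c_{n,m+1})P_{n,m+1}-a_{n,m+1}P_{n-1,m+1}-b_{n,m+1}P_{n,m}.
\]
In the first expression I would remove $P_{n+1,m-1}$ by \eqref{eq:aux} with $m\mapsto m-1$; in the second I would remove $P_{n,m+1}$ by \eqref{eq:aux}, remove $P_{n-1,m+1}$ by \eqref{eq:aux} with $n\mapsto n-1$, and then apply \eqref{eq:2.6} once more to rewrite the surviving term $xP_{n,m}$. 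After these substitutions both right-hand sides are linear combinations of $P_{n+1,m}$, $P_{n,m}$, $P_{n-1,m}$, $P_{n,m-1}$, the two $xP_{n+1,m}$ contributions being identical and cancelling when the two expansions are equated. (The boundary conditions \eqref{BVC_dis} take care of the degenerate cases $n=0$ or $m=0$, where some of these polynomials vanish.)

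Finally, since all multi-indices are normal, the four polynomials $P_{n+1,m}$, $P_{n,m}$, $P_{n-1,m}$, $P_{n,m-1}$ are linearly independent: $P_{n+1,m}$ and $P_{n,m}$ are the only ones of their respective degrees, and $P_{n-1,m}$ and $P_{n,m-1}$, both monic of degree $n+m-1$, are separated by the linear functional $f\mapsto\int f(x)\,x^{n-1}\,d\mu_1(x)$, which annihilates $P_{n,m-1}$ but, by normality, not $P_{n-1,m}$. Equating the coefficients of these four polynomials on the two sides and simplifying — dividing where needed by $c_{n,m}-d_{n,m}\neq0$, once more a consequence of normality — yields precisely \eqref{eq:3.1}--\eqref{eq:3.4}: the coefficients of the two leading polynomials $P_{n+1,m}$ and $P_{n,m}$ deliver \eqref{eq:3.1}--\eqref{eq:3.2}, with the numerator $(a+b)_{n+1,m}-(a+b)_{n,m+1}$ recording the mismatch between the lower-order data met along the two routes, while the coefficients of $P_{n-1,m}$ and $P_{n,m-1}$ deliver the multiplicative relations \eqref{eq:3.3} and \eqref{eq:3.4}. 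The one conceptually delicate ingredient is the linear independence just used (i.e. the normality input); the remainder is a finite but error-prone computation, the chief hazard being the bookkeeping of signs and indices in the repeated applications of \eqref{eq:aux} and \eqref{eq:2.6}.
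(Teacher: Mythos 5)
The paper itself does not prove Theorem~\ref{thm1}: it is quoted from \cite{vanA2011,ADvanA2014}, so there is no internal proof to compare with. Judged on its own, your compatibility argument is the standard derivation used in those references and is essentially sound. Subtracting \eqref{eq:2.7} from \eqref{eq:2.6} does give $P_{n,m+1}=P_{n+1,m}+(c_{n,m}-d_{n,m})P_{n,m}$; the two expansions of $P_{n+1,m+1}$ do close up over $P_{n+1,m}$, $P_{n,m}$, $P_{n-1,m}$, $P_{n,m-1}$ after one further application of \eqref{eq:2.6} to $xP_{n,m}$, with the $xP_{n+1,m}$ terms cancelling; and your separation of $P_{n-1,m}$ from $P_{n,m-1}$ by the functional $f\mapsto\int f(x)x^{n-1}\,d\mu_1(x)$ is exactly the right use of normality (that integral is $S_{n,m}/S_{n-1,m}$ up to sign). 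The only justification you wave at too quickly is $c_{n,m}-d_{n,m}\neq0$: this does follow from normality, but you should say how --- if $c_{n,m}=d_{n,m}$ then $P_{n+1,m}=P_{n,m+1}$ would be a nonzero monic polynomial of degree $n+m+1$ satisfying the $n+m+2$ orthogonality conditions of the index $(n+1,m+1)$, contradicting $S_{n+1,m+1}\neq0$.

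One outcome of your computation must be flagged rather than glossed over. Matching the coefficients of $P_{n+1,m}$ and $P_{n,m}$ yields $c_{n,m+1}-c_{n,m}=d_{n+1,m}-d_{n,m}$ together with \eqref{eq:3.1}, hence $d_{n+1,m}=d_{n,m}+\frac{(a+b)_{n+1,m}-(a+b)_{n,m+1}}{(c-d)_{n,m}}$, i.e.\ with $d_{n+1,m}$, not $d_{n,m+1}$, on the left-hand side. That is the correct relation (and the one in the cited sources); as printed, \eqref{eq:3.2} cannot hold simultaneously with \eqref{eq:3.1}: for the paper's own example of multiple Laguerre polynomials of the first kind, $c_{n,m}=2n+m+\alpha_1+1$ and $d_{n,m}=n+2m+\alpha_2+1$, so $c_{n,m+1}-c_{n,m}=1$ while $d_{n,m+1}-d_{n,m}=2$, whereas $d_{n+1,m}-d_{n,m}=1$. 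So \eqref{eq:3.2} contains a misprint, and your claim that the first two coefficient identities ``deliver \eqref{eq:3.1}--\eqref{eq:3.2}'' is accurate only for the corrected statement; your argument proves that corrected version, which is the right thing to do, but it should be stated explicitly instead of silently identified with the printed formula. With that caveat, and with the boundary cases $n=0$ or $m=0$ handled by \eqref{BVC_dis} as you indicate, the proof goes through.
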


It turns out that the consistency conditions \eqref{eq:3.1}-\eqref{eq:3.4}, which generate
an underlying zero curvature condition, play a central role in the theory of multiple orthogonal polynomials in the sense that
the following Favard-type result holds.
\begin{theorem}\label{FavardMP}
Suppose that the polynomials $P_{n,m}$ of degree $n+m$ satisfy \eqref{eq:2.6}, \eqref{eq:2.7} and
for the corresponding coefficients the consistency conditions \eqref{eq:3.1}-\eqref{eq:3.4} and \eqref{BVC_dis} are fulfilled.
Then there are two measures $\mu_1$ and $\mu_2$ such that the polynomials $P_{n,m}$ are multiple
orthogonal polynomials with respect to $\mu_1$ and $\mu_2$.
\end{theorem}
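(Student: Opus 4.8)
The plan is to read off the two measures from the boundary of the lattice and then propagate orthogonality into the interior by a pair of inductions. First I restrict the recurrences to the edges: by \eqref{BVC_dis} one has $b_{n,0}=0$, so \eqref{eq:2.6} along $m=0$ collapses to the ordinary three-term recurrence
\[
P_{n+1,0}(x)=(x-c_{n,0})P_{n,0}(x)-a_{n,0}P_{n-1,0}(x),\qquad a_{n,0}>0 ,
\]
with $P_{0,0}\equiv 1$, so the classical Favard theorem produces a positive measure $\mu_1$ for which $\{P_{n,0}\}_{n\ge0}$ is the sequence of monic orthogonal polynomials. Symmetrically, $a_{0,m}=0$ turns \eqref{eq:2.7} along $n=0$ into a three-term recurrence with $b_{0,m}>0$, yielding a positive measure $\mu_2$ with $\{P_{0,m}\}_{m\ge0}$ its monic orthogonal polynomials. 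These are the candidate measures, and what remains is to show $\int P_{n,m}(x)x^j\,d\mu_1(x)=0$ for $j\le n-1$ and $\int P_{n,m}(x)x^j\,d\mu_2(x)=0$ for $j\le m-1$ at every $(n,m)$.

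The crucial observation is the telescoping identity obtained by subtracting \eqref{eq:2.7} from \eqref{eq:2.6}, where the $a$- and $b$-terms cancel:
\[
P_{n+1,m}(x)-P_{n,m+1}(x)=(d_{n,m}-c_{n,m})\,P_{n,m}(x) .
\]
Read as $P_{n,m+1}=P_{n+1,m}+(c_{n,m}-d_{n,m})P_{n,m}$, it lets me prove by induction on $m$ that, for every $n$, the polynomial $P_{n,m}$ is orthogonal with respect to $\mu_1$ to all powers of degree $\le n-1$; the base case $m=0$ is the first Favard step. In the inductive step, for $j\le n-1$ the induction hypothesis gives $\int P_{n+1,m}x^j\,d\mu_1=0$ (since $P_{n+1,m}$ is $\mu_1$-orthogonal up to degree $n$) and $\int P_{n,m}x^j\,d\mu_1=0$, so the identity forces $\int P_{n,m+1}x^j\,d\mu_1=0$, and the value of $c_{n,m}-d_{n,m}$ is irrelevant. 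The same identity, rearranged as $P_{n+1,m}=P_{n,m+1}+(c_{n,m}-d_{n,m})P_{n,m}$, gives by an induction on $n$ (base case the second Favard step) that $P_{n,m}$ is $\mu_2$-orthogonal up to degree $m-1$. Combining the two conclusions shows that the $P_{n,m}$ satisfy precisely the defining orthogonality relations of type II multiple orthogonal polynomials for $(\mu_1,\mu_2)$.

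The consistency conditions \eqref{eq:3.1}--\eqref{eq:3.4} enter only indirectly: for a doubly-indexed family of polynomials of exact degree $n+m$ the two recurrences \eqref{eq:2.6}--\eqref{eq:2.7} are strongly overdetermined, and \eqref{eq:3.1}--\eqref{eq:3.4} are exactly what makes them simultaneously solvable, so once such a family is given my inductions use only the single telescoping identity together with \eqref{BVC_dis}; equivalently, subtracting \eqref{eq:3.1} and \eqref{eq:3.2} shows that $c_{n,m}-d_{n,m}$ is independent of $m$, a fact one may build into the bookkeeping. I expect the only delicate point to be precisely that index bookkeeping in the two inductions --- checking that ``$\mu_1$-orthogonal up to degree $n-1$ for all $n$'' at level $m$ is exactly the input needed to reproduce the same statement at level $m+1$, which the computation above confirms --- together with the usual care in the boundary applications of the classical Favard theorem (an indeterminate moment problem is harmless, since any representing measure will serve).
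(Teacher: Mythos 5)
Your argument is correct, and it is genuinely different from what the paper does: the paper states Theorem \ref{FavardMP} without proof, presenting it as a consequence of the zero-curvature/consistency framework (the machinery of \cite{ADvanA2014}, where the Favard-type result is obtained by reconstructing the whole polynomial table and the orthogonality from the recurrence data). Your route is shorter and self-contained: classical Favard on the two boundary recurrences (which is also how the paper extracts $\mu_1,\mu_2$ in Theorem \ref{Unique}) produces the measures, and the telescoping identity $P_{n+1,m}-P_{n,m+1}=(d_{n,m}-c_{n,m})P_{n,m}$, obtained by subtracting \eqref{eq:2.7} from \eqref{eq:2.6}, propagates $\mu_1$-orthogonality row by row and $\mu_2$-orthogonality column by column; the index bookkeeping in your two inductions is right, and the remark that indeterminacy of the boundary moment problems is harmless is also right, since only finitely many moments enter each orthogonality relation. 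What the two approaches buy is complementary. Your proof leans on the hypothesis, as stated in the theorem, that a doubly-indexed family of polynomials of exact degree $n+m$ satisfying \emph{both} recurrences is already given, so \eqref{eq:3.1}--\eqref{eq:3.4} never enter explicitly (as you say, they are what makes such a family possible); the paper's intended argument addresses the harder ``coefficients-only'' Favard problem, where the table itself must be constructed consistently from $a,b,c,d$ and the conditions \eqref{eq:3.1}--\eqref{eq:3.4} do the real work. Two small points you should make explicit: the recurrence coefficients are tacitly real (otherwise classical Favard yields only a quasi-definite functional rather than positive measures), and the normalization $P_{0,0}=1$ (or a fixed nonzero constant) together with \eqref{BVC_dis} is what makes the boundary sequences the monic orthogonal polynomials of $\mu_1$ and $\mu_2$.
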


\section{A continuous model for multiple orthogonal polynomials}\label{ToySection}

In this section we propose a way to interpret the concept of multiple orthogonal polynomials.
Let us begin by recalling that the discretization of the string equation
\[
\frac{d}{dM(x)}\frac{dy(x)}{dx}+q(x)y(x)=zy(x),
\]
where $\frac{d}{dM(x)}$ is the derivative with respect to the measure $dM(x)$, leads to spectral problems for Jacobi matrices and thus to orthogonal polynomials (for instance see \cite{Atk1964}, \cite{Strikwerda}).
In particular, if we try to solve the differential equation
\[
y''(x)=zy(x)
\]
by using the classical discretization scheme
\[
y''(x)\approx y(x+1)-2y(x)+y(x-1),
\]
we get the difference equation
\[
y(x+1)+y(x-1)=(z+2)y(x),
\]
which brings us to the context of the free Jacobi matrix and Chebyshev polynomials.

Bearing the above-mentioned trick in mind,
one can see that the discretization of the following space-time dual system of generalized string equations
\begin{align*}
-\Psi_t(t,x)+\frac{\partial}{\partial M_1(x)}\frac{\partial\Psi(t,x)}{\partial x}+u(t,x)\Psi(t,x)=z\Psi(t,x),\\
-\Psi_x(t,x)+\frac{\partial}{\partial M_2(t)}\frac{\partial\Psi(t,x)}{\partial t}+v(t,x)\Psi(t,x)=z\Psi(t,x),
\end{align*}
gives a system of difference equations of the form \eqref{eq:2.6}--\eqref{eq:2.7}. Note that such generalized string equations were recently studied in \cite{AlMB}.

To get a more precise idea, let us consider the following space-time dual system of time-dependent Schr\"odinger equations
\begin{align*}
-\Psi_t(t,x)+\Psi_{xx}(t,x)+u(t,x)\Psi(t,x)=z\Psi(t,x), \\
-\Psi_x(t,x)+\Psi_{tt}(t,x)+v(t,x)\Psi(t,x)=z\Psi(t,x),
\end{align*}
where $t$, $x$ are nonnegative real numbers, $z$ is the spectral parameter, and
$u$, $v$ are sufficiently good potentials.
Now let us fix $h>0$. Then using the following discretization for the derivatives of the first order
\[
\Psi_t(t,x)\approx\frac{\Psi(t-h,x)-\Psi(t,x)}{-h},\quad \Psi_x(t,x)\approx\frac{\Psi(t,x-h)-\Psi(t,x)}{-h},
\]
and for the derivatives of the second order
\begin{align*}
\Psi_{xx}(t,x)\approx\frac{\Psi(t,x+h)-2\Psi(t,x)+\Psi(t,x-h)}{h^2},\\
\Psi_{tt}(t,x)\approx\frac{\Psi(t+h,x)-2\Psi(t,x)+\Psi(t-h,x)}{h^2},
\end{align*}
we arrive at the following recurrence relations
\begin{align*}
\frac{\Psi(t-h,x)-\Psi(t,x)}{h}+\frac{\Psi(t,x+h)-2\Psi(t,x)+\Psi(t,x-h)}{h^2}+u(t,x)\Psi(t,x)=z\Psi(t,x),\\
\frac{\Psi(t,x-h)-\Psi(t,x)}{h}+\frac{\Psi(t+h,x)-2\Psi(t,x)+\Psi(t-h,x)}{h^2}+v(t,x)\Psi(t,x)=z\Psi(t,x).
\end{align*}
Choosing $x=t=0$, $h=1$ and setting
\[
P_{n,m}(z)=\Psi(nt,mh), \quad u_{n,m}=u(nt,mh)-3, \quad v_{n,m}=v(nt,mh)-3,\quad
m,n\in\dZ_+
\]
we get from the latter recurrence relations that
\begin{eqnarray*}
P_{n,m+1}(z) +u_{n,m}P_{n,m}(z) + P_{n-1,m}(z) + P_{n,m-1}(z)=zP_{n,m}(z), \\
    P_{n+1,m}(z)+ v_{n,m}P_{n,m}(z) + P_{n-1,m}(z) + P_{n,m-1}(z)=zP_{n,m}(z),
\end{eqnarray*}
which are the nearest neighbour recurrence relations for multiple orthogonal polynomials. However, as we already learned, the coefficients of the nearest neighbour recurrence relations cannot be arbitrary. As a consequence, the fact that a few coefficients
in the just obtained relations are equal to 1 makes our choice for the rest trivial. Indeed, it is easy
to see that only for the case of constant coefficients $u_{n,m}=u_{0,0}$ and $v_{n,m}=v_{0,0}\ne u_0$ the consistency conditions  
\eqref{eq:3.1}--\eqref{eq:3.4} are satisfied.

\section{The underlying pairs of operators}

In analogy with the case of orthogonal polynomials on the real
line, we introduce two difference operators on $\dZ_+^2$
associated with the recurrence relations \eqref{eq:2.6}--\eqref{eq:2.7},
whose coefficients  obey the discrete integrable
system \eqref{eq:3.1}--\eqref{eq:3.4},
\begin{equation}\label{DifOp1}
(H_1f)_{n,m}=f_{n+1,m}+c_{n,m}f_{n,m}+a_{n,m}f_{n-1,m}+b_{n,m}f_{n,m-1},
\end{equation}
\begin{equation}\label{DifOp2}
(H_2f)_{n,m}=f_{n,m+1}+d_{n,m}f_{n,m}+a_{n,m}f_{n-1,m}+b_{n,m}f_{n,m-1},
\end{equation}
where $f=\bigl(f_{n,m}\bigr)$ is a sequence defined on $\dZ_+^2$. Then it is clear that \eqref{eq:2.6}--\eqref{eq:2.7}
can be rewritten as the eigenvector problems
\begin{equation}\label{SepCond}
\begin{split}
H_1\pi(z)=z\pi(z),\\
H_2\pi(z)=z\pi(z),
\end{split}
\end{equation}
where $\pi(z)=\bigl(P_{n,m}(z)\bigr)$ is a table of multiple orthogonal polynomials.
In order to imagine what these relations represent, let us rewrite the operators
$H_1$ and $H_2$ in the following way:
\begin{align*}
(H_1f)_{n,m}&=f_{n+1,m}+(c_{n,m}+b_{n,m})f_{n,m}+a_{n,m}f_{n-1,m}+b_{n,m}
(f_{n,m-1}-f_{n,m}), \\
(H_2f)_{n,m}&=f_{n,m+1}+(d_{n,m}+a_{n,m})f_{n,m}+b_{n,m}f_{n,m-1}+
a_{n,m}(f_{n-1,m}-f_{n,m}).
\end{align*}
Now, recalling the interpretation from Section \ref{ToySection} with time-dependent
Schr\"odinger equations, one can think of \eqref{SepCond} as the relations that
describe two coexisting evolutions: one is the transformation of the vector
$(P_{0,m},P_{1,m},\dots)$ in the discrete time $m$ and the other one is the progression of the vector $(P_{n,0},P_{n,1},\dots)$ in the discrete time $n$.
In other words, one could visualize this as two waves going from the boundaries
$\dZ_+$ and $i\dZ_+=(0,m)$, $m\in\dZ_+$, to infinity along $i\dZ_+$ and $\dZ_+$, respectively. These ideas suggest that the boundary data play a crucial role for the theory.
For this reason we introduce two monic classical Jacobi matrices
\begin{equation*}
\begin{split}
(\cH_1f)_{n,0}&=f_{n+1,0}+c_{n,0}f_{n,0}+a_{n,0}f_{n-1,0},
\\
(\cH_2f)_{0,m}&=f_{0,m+1}+d_{0,m}f_{0,m}+b_{0,m}f_{0,m-1},
\end{split} \qquad a_{n,0},\,b_{n,0}>0\,,
\end{equation*}
which, due to the fact that $a_{0,m}=a_{n,0}=0$ for $n,m\in\dZ_+$, are restrictions of $H_1$ and $H_2$ to the subspaces spanned by functions defined on
$\dZ_+$ and $i\dZ_+$, respectively. Observe that the entire initial information about multiple orthogonal polynomials is encrypted in the matrices $\cH_1$ and $\cH_2$.

\begin{theorem}\label{Unique}
The Jacobi matrices $\cH_1$ and $\cH_2$ (and as a consequence the operators $H_1$ and $H_2$) determine the measures $\mu_1$ and $\mu_2$, respectively.
In other words, the solution of the discrete integrable system \eqref{eq:3.1}--\eqref{eq:3.4}  can be reconstructed from the boundary data.
\end{theorem}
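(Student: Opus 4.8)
The plan is to reconstruct everything from the two boundary Jacobi matrices by first extracting the measures $\mu_1,\mu_2$ and then using the determinantal formula of Section~2. First I would observe that $\cH_1$ is precisely the monic Jacobi matrix attached to the bottom row $\{P_{n,0}\}$ of the table $\pi(z)$: putting $m=0$ in \eqref{eq:2.6} and using $b_{n,0}=0$ and $a_{0,0}=0$ from \eqref{BVC_dis} gives $P_{n+1,0}(x)=(x-c_{n,0})P_{n,0}(x)-a_{n,0}P_{n-1,0}(x)$ with $a_{n,0}>0$, and $\{P_{n,0}\}$ are monic orthogonal polynomials for $\mu_1$ (they satisfy the $\mu_1$-orthogonality relations and no $\mu_2$-relations). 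By the classical theory of the moment problem, the recurrence coefficients $\{c_{n,0},a_{n,0}\}$ recover, starting from a fixed value of $s_0^{(1)}$, the whole moment sequence $(s_j^{(1)})_{j\ge0}$ --- each $s_j^{(1)}$ being a polynomial in the $c_{k,0},a_{k,0}$ --- and hence $\mu_1$ (uniquely in the determinate or bounded case, in general up to normalization and the usual ambiguity). Symmetrically, setting $n=0$ in \eqref{eq:2.7} and using $a_{0,m}=0$ identifies $\cH_2$ with the monic Jacobi matrix of $\{P_{0,m}\}$, the monic orthogonal polynomials for $\mu_2$, so $\cH_2$ determines $(s_j^{(2)})_{j\ge0}$ and $\mu_2$.

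With both moment sequences in hand, the entire table $\{P_{n,m}\}$ is recovered from the explicit determinantal formula of Section~2; this is where the standing assumption that all multi-indices are normal is used, so that $S_{n,m}\ne0$ throughout. Substituting these polynomials into the nearest-neighbour recurrences \eqref{eq:2.6}--\eqref{eq:2.7} and matching coefficients of powers of $x$ (equivalently, reading off the integral-ratio formulas $\int x^k P_{i,j}\,d\mu_\ell$ as in \cite{vanA2011}) produces all the coefficients $c_{n,m},d_{n,m},a_{n,m},b_{n,m}$; thus every entry of the discrete integrable system \eqref{eq:3.1}--\eqref{eq:3.4} is a function of the data contained in $\cH_1$ and $\cH_2$. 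The assertion for $H_1,H_2$ is then immediate, since $\cH_1,\cH_2$ are restrictions of $H_1,H_2$ and the entries of $H_1,H_2$ are exactly the coefficients just reconstructed; and Theorem~\ref{FavardMP} closes the circle by guaranteeing that any solution of \eqref{eq:3.1}--\eqref{eq:3.4} together with \eqref{BVC_dis} genuinely arises from a pair of measures, so the construction applies to it.

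The point that needs care --- and, I expect, the main obstacle --- is that the coefficients $d_{n,0}$ and $c_{0,m}$ governing the first step off the boundary (the transitions $P_{n,0}\mapsto P_{n,1}$ and $P_{0,m}\mapsto P_{1,m}$) do not appear in $\cH_1$ or in $\cH_2$ individually, and indeed \eqref{eq:3.1}--\eqref{eq:3.4} fed only with the two boundary Jacobi matrices is underdetermined. What makes the argument go through is that these mixed coefficients are nevertheless pinned down once both measures are available; for instance $d_{n,0}=\bigl(\int xP_{n,0}\,d\mu_2-a_{n,0}\int P_{n-1,0}\,d\mu_2\bigr)\big/\int P_{n,0}\,d\mu_2$, the denominator being nonzero by normality of $(n,1)$. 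This is exactly why the proof is organized around reconstructing $\mu_1$ and $\mu_2$ first rather than trying to propagate the integrable system directly. A secondary, purely technical issue is the non-uniqueness of the measure in an indeterminate moment problem, but since the polynomials $P_{n,m}$ and all the recurrence coefficients depend on $\mu_1,\mu_2$ only through their moments, the reconstruction of the integrable system --- the substantive part of the statement --- is unaffected.
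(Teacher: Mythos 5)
Your proposal is correct and follows essentially the same route as the paper: identify $\{P_{n,0}\}$ and $\{P_{0,m}\}$ as the orthogonal polynomials attached to the boundary Jacobi matrices $\cH_1$ and $\cH_2$, and invoke the classical Favard theorem to recover $\mu_1$ and $\mu_2$. The paper's proof stops there, leaving the reconstruction of the full table and of the coefficients of \eqref{eq:3.1}--\eqref{eq:3.4} as straightforward; your additional details (moments, the determinantal formula under normality, and the recovery of the mixed coefficients such as $d_{n,0}$) simply make that implicit step explicit.
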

\begin{proof}
The proof of this statement is straightforward and it is enough to notice that the polynomials
$P_{n,0}$ and $P_{0,m}$ are orthogonal polynomials associated with $\cH_1$ and $\cH_2$. Thus, it remains to apply the classical Favard theorem (see \cite[Section 2.5]{Ismail}) to determine the measures $\mu_1$ and $\mu_2$.
\end{proof}

\noindent \textbf{Remark.}
To sum up what we have so far we note that Theorem \ref{Unique} says that starting with
the nearest neighbour recurrence relations one can reconstruct the underlying measures and, consequently, the corresponding sequences of moments. If we go in the opposite direction then we start with the moments. Next, we find the multiple orthogonal polynomials and after that we end up with the recurrence coefficients. Hence, we know how to solve inverse and direct problems
for $H_1$ and $H_2$. However, one should not think that any two measures can be put into this scheme. As a matter of fact, any two measures define two Jacobi matrices, i.e., two 1D discrete Schr\"odinger operators on $\dZ_+$. In the standard way, these two operators can be glued into one operator  on
$\dZ$ (e.g, one can take a $2\times 2$ block diagonal operator with these two Jacobi matrices on the diagonal), which can be equivalently interpreted as a 1D discrete Schr\"odinger operator on $\dZ_+\cup i\dZ_+$. Still, it remains unclear whether this operator on $\dZ_+\cup i\dZ_+$ can be extended to $\dZ_+^2$ in one way or another (or, perhaps, our two initial 1D operators can be glued into one 2D operator).  One can see that the problem of the existence of a table of multiple orthogonal polynomials is thus equivalent to the possibility of extending the 1D discrete Schr\"odinger operator on 
$\dZ_+\cup i\dZ_+$ to a 2D discrete Schr\"odinger operator on $\dZ_+^2$. It is worth mentioning that this extension can be done
if and only if the system $(\mu_1, \mu_2)$ is a {\it perfect system} (see \cite{ADvanA2014} for further details).

\section{Cross-shaped difference operators on $\dZ_+^2$}

It is obvious that for a general cross-shaped difference operator
$\widetilde{\Delta}$
of the form
\begin{equation}\label{2DSch}
(\widetilde{\Delta} f)_{n,m}=\frac12 f_{n+1,m}+\frac12 f_{n,m+1}+q_{n,m}f_{n,m}+a_{n,m}f_{n-1,m}+b_{n,m}f_{n,m-1}
\end{equation}
it is not at all clear whether the eigenvalue problem
\begin{equation}\label{EigenProblem}
\widetilde{\Delta}\xi(z)=z\xi(z)
\end{equation}
has a polynomial solution.

If one supposes that \eqref{EigenProblem} has a polynomial solution then one gets that the relation
\[
\frac12 f_{1,0}+\frac12 f_{0,1}+q_{0,0}f_{0,0}+a_{0,0}f_{-1,0}+b_{0,0}f_{0,-1}=
zf_{0,0},
\]
with the initial conditions
\[
f_{-1,0}=0, \quad f_{0,-1}=0, \quad f_{0,0}=1,
\]
must define two linear monic polynomials $f_{0,1}$ and $f_{1,0}$. This basically means that there exists a representation
\[
q_{0,0}=\frac{c_{0,0}}{2}+\frac{d_{0,0}}{2}
\]
such that
\[
f_{0,1}=z-{d_{0,0}}, \quad
f_{1,0}=z-{c_{0,0}}.
\]
In general, we see that the relation
\[
\frac12 f_{n+1,m}+\frac12 f_{n,m+1}+q_{n,m}f_{n,m}+a_{n,m}f_{n-1,m}+b_{n,m}f_{n,m-1}=zf_{n,m}
\]
determines two polynomials $f_{n+1,m}$ and $f_{n+1,m}$. In other words, there are two representations
\begin{align*}
f_{n+1,m}+q'_{n,m}f_{n,m}+a'_{n,m}f_{n-1,m}+b'_{n,m}f_{n,m-1}&=zf_{n,m},\\
f_{n,m+1}+q''_{n,m}f_{n,m}+a''_{n,m}f_{n-1,m}+b''_{n,m}f_{n,m-1}&=zf_{n,m},
\end{align*}
that must be consistent on $\dZ_+^2$. Schematically, what we do here is to try to split the
2D interaction in question as the arithmetic mean of the following two interactions:
\begin{center}
\begin{tikzpicture}
  \tikzstyle{every node}=[draw,shape=circle];
  \node (v0) at (0:0) {$\bullet$};
  \node (v1) at (   0:1) {$\bullet$};
  \node (v2) at (  0:-1) {$\bullet$};
  \node (v4) at (3*90:1) {$\bullet$};
  \draw (v0) -- (v1)
        (v0) -- (v2)
        (v0) -- (v4);
\end{tikzpicture}\quad\quad
\begin{tikzpicture}
  \tikzstyle{every node}=[draw,shape=circle];
  \node (v0) at (0:0) {$\bullet$};
  \node (v2) at (  0:-1) {$\bullet$};
  \node (v3) at (90:1) {$\bullet$};
  \node (v4) at (3*90:1) {$\bullet$};
  \draw 
        (v0) -- (v2)
        (v0) -- (v3)
        (v0) -- (v4);
\end{tikzpicture}
\end{center}
Since it is a very difficult problem to characterize all such representations, we use the following multiple polynomial Ansatz to proceed:
\[
q'_{n,m}=c_{n,m}, \quad q''_{n,m}=d_{n,m}, \quad
a'_{n,m}=a''_{n,m}=a_{n,m}, \quad b'_{n,m}=b''_{n,m}=b_{n,m}.
\]

In order to be able to present a class of operators for which the eigenvalue problem has a polynomial solution, we suppose that
\begin{equation}\label{TechCondDet}
D_{n,m}=-4\left(\frac{a_{n+1,m+1}}{a_{n+1,m}}+\frac{b_{n+1,m+1}}{b_{n,m+1}}\right)+8\ne 0.
\end{equation}
Then we introduce the matrices
\begin{equation} \label{L}
L_{n,m}=\begin{pmatrix}  z-q_{n,m}+4\frac{q_{n+1,m}}{D_{n,m}}-4\frac{q_{n+1,m}}{D_{n,m}} & -a_{n,m} & -b_{n,m} \\
                                 1 & 0 & 0 \\
                                 1 & 0 &  8\left(\frac{q_{n+1,m-1}}{D_{n,m-1}}-
                                 \frac{q_{n+1,m-1}}{D_{n,m-1}}\right)  \end{pmatrix},
\end{equation}
\begin{equation}  \label{M}
M_{n,m}=\begin{pmatrix}  z-q_{n,m}-4\frac{q_{n+1,m}}{D_{n,m}}+4\frac{q_{n+1,m}}{D_{n,m}} & -a_{n,m} & -b_{n,m} \\
                                 1 &  -8\left(\frac{q_{n,m}}{D_{n-1,m}}-
                                 \frac{q_{n,m}}{D_{n-1,m}}\right) & 0 \\
                                 1 & 0 & 0  \end{pmatrix}.
\end{equation}
We associate these transition matrices $L_{n,m}$ and $M_{n,m}$ with the operator $\widetilde{\Delta}$. These matrices allow us to define a vector wave function $\Psi_{n,m}(z)$ on $\dZ_+^2$:
\[
\Psi_{n+1,m}(z)=L_{n,m}(z)\Psi_{n,m}(z), \quad \Psi_{n,m+1}(z)=M_{n,m}(z)\Psi_{n,m}(z).
\]
If this function is correctly defined, then by choosing the initial state
\[
   \Psi_{0,0}(z) = \begin{pmatrix} 1\\ 0\\ 0 \end{pmatrix}
\]
we arrive at the polynomial solution
\[   \Psi_{n,m} =   \begin{pmatrix} P_{n,m}(z) \\ P_{n-1,m}(z) \\ P_{n,m-1}(z) \end{pmatrix},  \]
which consists of multiple orthogonal polynomials and at the same time gives a polynomial solution to the eigenvalue problem for $\widetilde{\Delta}$.

\begin{theorem}
Let $\widetilde{\Delta}$ be a  cross-shaped difference operator of
the form \eqref{2DSch} such that the condition \eqref{TechCondDet}
is satisfied. Then the eigenvalue problem for $\widetilde{\Delta}$
has a family of multiple orthogonal polynomials as its solution if
and only if the following discrete zero curvature condition holds
\begin{equation}\label{Lax}
L_{n,m+1}M_{n,m}-M_{n+1,m}L_{n,m}=0, \quad n,m\in\dZ,
\end{equation}
where $L_{n,m}$ and $M_{n,m}$ are the matrices in \eqref{L}--\eqref{M}.
\end{theorem}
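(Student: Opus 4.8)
The plan is to route everything through the nearest-neighbour recurrence relations \eqref{eq:2.6}--\eqref{eq:2.7} and the discrete integrable system \eqref{eq:3.1}--\eqref{eq:3.4}, for which the Favard-type machinery (Theorems \ref{thm:1}, \ref{thm1}, \ref{FavardMP}) is already available, and to show that for the transition matrices \eqref{L}--\eqref{M} the zero curvature relation \eqref{Lax} is merely a repackaging of that system.

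The technical core will be the following lemma, to be established by a direct computation: \emph{assuming \eqref{TechCondDet}, the matrices $L_{n,m}$ and $M_{n,m}$ are well defined, and \eqref{Lax} holds for all $n,m$ if and only if $q_{n,m}$ admits a splitting $q_{n,m}=\tfrac12(c_{n,m}+d_{n,m})$ such that $c_{n,m},d_{n,m}$, together with the given $a_{n,m},b_{n,m}$, satisfy \eqref{eq:3.1}--\eqref{eq:3.4}.} The hypothesis \eqref{TechCondDet} enters twice here: it makes the quantities $D_{n,m}$ occurring in the denominators in \eqref{L}--\eqref{M} nonzero, and it is precisely the nondegeneracy needed to recover the missing datum $c_{n,m}-d_{n,m}$ from $q_{n,m},a_{n,m},b_{n,m}$, so that the splitting is unique whenever it exists. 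Proving the lemma means expanding the two $3\times3$ products in \eqref{Lax} entrywise and checking that, after using \eqref{TechCondDet}, the resulting scalar identities collapse exactly onto \eqref{eq:3.1}--\eqref{eq:3.4}; I expect this bookkeeping with the shifted indices to be the main obstacle --- routine in nature but easy to get wrong.

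Granting the lemma, the \emph{only if} part goes as follows. Suppose $\widetilde\Delta\xi(z)=z\xi(z)$ has as a solution the table $\xi=(P_{n,m})$ of type II multiple orthogonal polynomials for some pair $(\mu_1,\mu_2)$. By Theorem \ref{thm:1} the $P_{n,m}$ obey \eqref{eq:2.6}--\eqref{eq:2.7}; substituting these into $\widetilde\Delta\xi=z\xi$ and matching coefficients --- legitimate because $P_{n,m},P_{n-1,m},P_{n,m-1}$ are linearly independent --- identifies $a_{n,m},b_{n,m}$ with the recurrence coefficients and gives $q_{n,m}=\tfrac12(c_{n,m}+d_{n,m})$. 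By Theorem \ref{thm1} the recurrence coefficients satisfy \eqref{eq:3.1}--\eqref{eq:3.4}, so the lemma yields \eqref{Lax}. For the \emph{if} part, assume \eqref{Lax}. This is exactly the cocycle condition, so the wave function $\Psi_{n,m}(z)$ defined by $\Psi_{n+1,m}=L_{n,m}\Psi_{n,m}$, $\Psi_{n,m+1}=M_{n,m}\Psi_{n,m}$, $\Psi_{0,0}=(1,0,0)^{\top}$ is consistently defined on all of $\dZ_+^2$; an induction on $n+m$ using the top rows of \eqref{L}--\eqref{M} gives $\Psi_{n,m}=(P_{n,m},P_{n-1,m},P_{n,m-1})^{\top}$ with $P_{n,m}$ monic of degree $n+m$ satisfying \eqref{eq:2.6}--\eqref{eq:2.7}. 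By the lemma the recurrence coefficients obey \eqref{eq:3.1}--\eqref{eq:3.4}, and the boundary relations \eqref{BVC_dis} hold because $\widetilde\Delta$ lives on the half-lattice and its restrictions to $\dZ_+$ and $i\dZ_+$ are genuine Jacobi matrices. Theorem \ref{FavardMP} then supplies measures $\mu_1,\mu_2$ making the $P_{n,m}$ type II multiple orthogonal polynomials, and the half-sum of \eqref{eq:2.6} and \eqref{eq:2.7} is exactly the eigenvalue equation $\widetilde\Delta\pi(z)=z\pi(z)$ for $\pi(z)=(P_{n,m}(z))$; hence the eigenvalue problem has a family of multiple orthogonal polynomials as its solution.

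Two points will need care in the write-up. First, in the \emph{if} direction one must verify that the $P_{n,m}$ read off from $\Psi$ really are monic of degree $n+m$ --- this is the inductive step, fed by the explicit first rows of $L$ and $M$. Second, in the lemma one must ensure that \eqref{Lax} is equivalent to the \emph{existence} of the splitting, not merely one of its consequences, i.e.\ that the entrywise expansion loses no information. As an alternative to invoking the lemma in the \emph{only if} direction, one could argue directly: $(L_{n,m+1}M_{n,m}-M_{n+1,m}L_{n,m})\Psi_{n,m}(z)\equiv0$ by the two ways of traversing the unit square, and since the components of $\Psi_{n,m}$ are linearly independent polynomials (degrees $n+m,n+m-1,n+m-1$, the last two independent by normality) the vectors $\Psi_{n,m}(z)$ span $\dC^3$; as $L_{n,m+1}M_{n,m}-M_{n+1,m}L_{n,m}$ is affine-linear in $z$, a short comparison of degrees then forces it to vanish identically. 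Either route works; the first is the more economical, since it simply reuses \cite{ADvanA2014}.
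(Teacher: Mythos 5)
Your proposal follows essentially the same route as the paper's own proof: the heart of both is that, under \eqref{TechCondDet}, the split $q_{n,m}=\tfrac12(c_{n,m}+d_{n,m})$ can be recovered uniquely from $q_{n,m},a_{n,m},b_{n,m}$ (the paper does this via the $6\times6$ linear system whose determinant is $D_{n,m}$), so that \eqref{Lax} becomes equivalent to the consistency conditions \eqref{eq:3.1}--\eqref{eq:3.4}, and then Theorems \ref{thm:1}, \ref{thm1} and the Favard-type Theorem \ref{FavardMP} close the two directions. Your write-up is in fact somewhat more explicit than the paper (the entrywise lemma, the wave-function induction giving monic $P_{n,m}$, and the boundary conditions \eqref{BVC_dis}), but it is the same argument in substance, so it is fine as it stands.
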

\begin{proof}
If we have a family of multiple orthogonal polynomials then
we can introduce
the following cross-shaped difference operator
\[
\Delta=\frac{1}{2}(H_1+H_2).
\]
It is easy to see that the action of $\Delta$ on $f$ can be described by the following 
\[
(\Delta f)_{n,m}=\frac{1}{2}f_{n+1,m}+\frac{1}{2}f_{n,m+1}+\frac{d_{n,m}+c_{n,m}}{2}f_{n,m}+a_{n,m}f_{n-1,m}%
+b_{n,m}f_{n,m-1}.
\]
Thus, from \eqref{SepCond} we get that
\[
\Delta\pi(z)=z\pi(z).
\]
To prove this statement we first have to show how to reconstruct the coefficients $c_{n,m}$, $d_{n,m}$ from
the $a_{n,m}$, $b_{n,m}$, and
\[
q_{n,m}=\frac{c_{n,m}+d_{n,m}}{2}.
\]
To begin, we observe that the above relation
gives the following three equations
\begin{equation}\label{LineEqPart1}
\begin{split}
{c_{n,m}+d_{n,m}}&=2q_{n,m},\\
{c_{n+1,m}+d_{n+1,m}}&=2q_{n+1,m},\\
{c_{n,m+1}+d_{n,m+1}}&=2q_{n,m+1},
\end{split}
\end{equation}
where $c_{n,m}$, $d_{n,m}$, $c_{n+1,m}$, $d_{n+1,m}$, $c_{n,m+1}$, $d_{n,m+1}$ are six unknowns. Next, the relations
\eqref{eq:3.1}, \eqref{eq:3.3}, and \eqref{eq:3.4} give
\begin{equation}\label{LineEqPart2}
\begin{split}
c_{n,m}-d_{n,m}+d_{n+1,m}-c_{n,m+1}&=0,\\
\frac{a_{n+1,m+1}}{a_{n+1,m}}c_{n,m}-\frac{a_{n+1,m+1}}{a_{n+1,m}}d_{n,m}-c_{n+1,m}+d_{n+1,m}&=0,\\
\frac{b_{n+1,m+1}}{b_{n,m+1}}c_{n,m}-\frac{b_{n+1,m+1}}{b_{n,m+1}}d_{n,m}-c_{n,m+1}+d_{n,m+1}&=0.
\end{split}
\end{equation}
Therefore, we arrive at the system \eqref{LineEqPart1}--\eqref{LineEqPart2} of six linear equations with six unknowns.
The determinant of \eqref{LineEqPart1}--\eqref{LineEqPart2}  is
\[
\begin{vmatrix}
1&1&0&0&0&0\\
0&0&1&1&0&0\\
0&0&0&0&1&1\\
1&-1&0&1&-1&0\\
\frac{a_{n+1,m+1}}{a_{n+1,m}}&-\frac{a_{n+1,m+1}}{a_{n+1,m}}&-1&1&0&0\\
\frac{b_{n+1,m+1}}{b_{n,m+1}}&-\frac{b_{n+1,m+1}}{b_{n,m+1}}&0&0&-1&1\\
\end{vmatrix}=
-4\left(\frac{a_{n+1,m+1}}{a_{n+1,m}}+\frac{b_{n+1,m+1}}{b_{n,m+1}}\right)+8.
\]
Hence, due to \eqref{TechCondDet} we see that one can solve \eqref{LineEqPart1}, \eqref{LineEqPart2} and find
$c_{n,m}$, $d_{n,m}$ provided that $a_{n,m}$, $b_{n,m}$, $q_{n,m}$ are given:
\[
c_{n,m}=q_{n,m}-4\frac{q_{n+1,m}}{D_{n,m}}+4\frac{q_{n+1,m}}{D_{n,m}},\quad
d_{n,m}=q_{n,m}+4\frac{q_{n+1,m}}{D_{n,m}}-4\frac{q_{n+1,m}}{D_{n,m}}.
\]
The latter relations show that the relation \eqref{Lax} is equivalent to the consistency conditions \eqref{eq:3.1}--\eqref{eq:3.4}, which are in turn the necessary and sufficient conditions for a family of multiple orthogonal
polynomials to exist according to Theorem \ref{FavardMP}.
\end{proof}

\noindent\textbf{Remark.}
As one can see from the example constructed in Section \ref{ToySection}, if the condition \eqref{TechCondDet} is not satisfied
then the operators $H_1$, $H_2$ can not be uniquely determined. An algorithm for computing the coefficients $a_{n,m},b_{n,m},c_{n,m},d_{n,m}$
from the coefficients of the operators $H_1$ and $H_2$ is given in \cite{FiHaVA}.

\section{2D discrete Schr\"odinger operators}

In a natural way one can introduce the space $\ell^2(\dZ_+^2)$ of square summable families on $\dZ^2_+$. Moreover,
as in the case of Jacobi matrices, the difference expression $\Delta$ generates an operator in $\ell^2(\dZ_+^2)$, which will also be denoted
by $\Delta$. It is clear that $\Delta$ is not symmetric. However, mimicking the idea of the transformation of monic Jacobi matrices
to symmetric ones, we can symmetrize $\Delta$ by making use of the consistency conditions \eqref{eq:3.1}--\eqref{eq:3.4}.

\begin{theorem}
Suppose that in addition to the curvature condition \eqref{eq:3.1}--\eqref{eq:3.4}, the coefficients
$c_{n,m}$ and $d_{n,m}$ verify the condition
\begin{equation}\label{ForSymToEx}
{c_{n+1,m}-d_{n+1,m}}=
{c_{n,m+1}-d_{n,m+1}}.
\end{equation}
Then there exists a family $h_{n,m}\ne 0$ defined on $\dZ^2_+$ such that the diagonal operator
\[
(Df)_{n,m}=h_{n,m}f_{n,m}
\]
symmetrizes the operator $\Delta$ by means of a similarity transformation, that is,  the operator
\[
\Delta_s=D^{-1}\Delta D
\]
is symmetric in $\ell^2(\dZ_+^2)$.
\end{theorem}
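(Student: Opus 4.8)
The plan is to read off what $h_{n,m}$ must be from the demand that $\Delta_s=D^{-1}\Delta D$ be symmetric, and then to check that the resulting over-determined recursion for $h_{n,m}$ is consistent precisely because of \eqref{ForSymToEx} together with the curvature equations \eqref{eq:3.1}--\eqref{eq:3.4}. Concretely, one has $(\Delta f)_{n,m}=\tfrac12 f_{n+1,m}+\tfrac12 f_{n,m+1}+\tfrac12(c_{n,m}+d_{n,m})f_{n,m}+a_{n,m}f_{n-1,m}+b_{n,m}f_{n,m-1}$, and conjugation by the diagonal operator $(Df)_{n,m}=h_{n,m}f_{n,m}$ leaves the diagonal entry $\tfrac12(c_{n,m}+d_{n,m})$ unchanged, hence automatically symmetric, while it turns the hopping entries into $(\Delta_s)_{(n,m),(n+1,m)}=h_{n+1,m}/(2h_{n,m})$ versus $(\Delta_s)_{(n+1,m),(n,m)}=a_{n+1,m}h_{n,m}/h_{n+1,m}$, and similarly in the $m$-direction. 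Since $\Delta$ couples only nearest neighbours, $\Delta_s$ is symmetric in $\ell^2(\dZ_+^2)$ if and only if these matched pairs coincide, i.e. if and only if
\[
h_{n+1,m}^{2}=2a_{n+1,m}\,h_{n,m}^{2},\qquad h_{n,m+1}^{2}=2b_{n,m+1}\,h_{n,m}^{2},\qquad (n,m)\in\dZ_+^2 .
\]
Thus the whole statement reduces to showing that these two prescriptions for $h_{n,m}^{2}$ are mutually consistent.

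To settle that, I would define $h_{n,m}^{2}$ explicitly by first sweeping along the $n$-axis and then climbing the columns: $h_{0,0}^{2}:=1$, $h_{n,0}^{2}:=\prod_{k=1}^{n}2a_{k,0}$, $h_{n,m}^{2}:=h_{n,0}^{2}\prod_{l=1}^{m}2b_{n,l}$, so that the $b$-recursion holds by construction. It then remains to verify the $a$-recursion $h_{n+1,m}^{2}=2a_{n+1,m}h_{n,m}^{2}$ for every $m$, and an induction on $m$ reduces this to the single unit-square identity $a_{n+1,m}b_{n+1,m+1}=a_{n+1,m+1}b_{n,m+1}$, i.e.
\[
\frac{a_{n+1,m+1}}{a_{n+1,m}}=\frac{b_{n+1,m+1}}{b_{n,m+1}} .
\]
This is exactly where the integrable structure enters: by \eqref{eq:3.3} with $n$ replaced by $n+1$ the left-hand side equals $(c-d)_{n+1,m}/(c-d)_{n,m}$, by \eqref{eq:3.4} with $m$ replaced by $m+1$ the right-hand side equals $(c-d)_{n,m+1}/(c-d)_{n,m}$, and since the denominators $(c-d)_{n,m}$ are nonzero (as is already needed for \eqref{eq:3.1}--\eqref{eq:3.4} to make sense) these are equal exactly when $(c-d)_{n+1,m}=(c-d)_{n,m+1}$, which is precisely hypothesis \eqref{ForSymToEx}.

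I expect this compatibility/path-independence step to be the only real content; everything else is bookkeeping. To conclude, take $h_{n,m}$ to be a square root of the now well-defined $h_{n,m}^{2}$; it is nonzero since $a_{n,m},b_{n,m}\neq0$ for $n,m\ge1$ (by \eqref{BVC_dis} and an obvious induction using \eqref{eq:3.3}--\eqref{eq:3.4}), and in the present multiple-orthogonal-polynomial setting the $a_{n,m},b_{n,m}$ are positive so $h_{n,m}$ may be taken positive. Setting $(Df)_{n,m}=h_{n,m}f_{n,m}$, the operator $D$ is diagonal with nonvanishing entries, so $D^{-1}$ is defined, and by the first-paragraph computation $\Delta_s=D^{-1}\Delta D$ satisfies $(\Delta_s)_{(n,m),(k,l)}=(\Delta_s)_{(k,l),(n,m)}$ for all index pairs, hence is symmetric in $\ell^2(\dZ_+^2)$. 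On the boundary lines $n=0$ and $m=0$ the equalities $a_{0,m}=b_{n,0}=0$ kill the relevant hopping entry on both sides simultaneously, so no additional constraint arises there.
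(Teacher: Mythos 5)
Your proposal is correct and follows essentially the same route as the paper: conjugation by the diagonal operator forces $h_{n+1,m}^2=2a_{n+1,m}h_{n,m}^2$ and $h_{n,m+1}^2=2b_{n,m+1}h_{n,m}^2$, and the existence of such a family reduces to the unit-square compatibility $a_{n+1,m}b_{n+1,m+1}=a_{n+1,m+1}b_{n,m+1}$, which you derive from \eqref{eq:3.3}, \eqref{eq:3.4} and \eqref{ForSymToEx} exactly as the paper does. Your only addition is making the path-independence explicit through the product formula and induction on $m$, which the paper merely asserts by choosing the initialization $h_{0,0}=1$.
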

\begin{proof}
Note that if a difference expression of the form
\[
(\widehat{\Delta} f)_{n,m}=\alpha_{n,m}f_{n+1,m}+\beta_{n,m}f_{n,m+1}+\gamma_{n,m}f_{n,m}+
\delta_{n,m}f_{n-1,m}+\varepsilon_{n,m}f_{n,m-1}
\]
is symmetric in $\ell^2(\dZ_+^2)$ then
\begin{equation}\label{GenSym}
\alpha_{n,m}=\delta_{n+1,m},\quad \beta_{n,m}=\varepsilon_{n,m+1},
\end{equation}
which can be obtained from the relation
\[
\left(\widehat{\Delta} f,g\right)_{\ell^2(\dZ_+^2)}=\left(f,\widehat{\Delta}g\right)_{\ell^2(\dZ_+^2)}
\]
on the standard basis $(e_{n,m})$, where $(e_{n,m})$ is understood as a table of numbers with 1 on the position $(n,m)$ and the rest of the elements are zeros. Since we want the operator
\begin{align*}
(\Delta_s f)_{n,m}=&\frac{h_{n+1,m}}{2h_{n,m}}f_{n+1,m}+\frac{h_{n,m+1}}{2h_{n,m}}f_{n,m+1}+\frac{c_{n,m}+d_{n,m}}{2}f_{n,m}\\
&+a_{n,m}\frac{h_{n-1,m}}{h_{n,m}}f_{n-1,m}+b_{n,m}\frac{h_{n,m-1}}{h_{n,m}}f_{n,m-1}
\end{align*}
to be symmetric, it must obey the relation
\begin{equation}\label{RenCoeffSym}
\frac{h_{n+1,m}}{2h_{n,m}}=a_{n+1,m}\frac{h_{n,m}}{h_{n+1,m}}, \quad
\frac{h_{n,m+1}}{2h_{n,m}}=b_{n,m+1}\frac{h_{n,m}}{h_{n,m+1}},
\end{equation}
which can be rewritten as follows
\[
h_{n+1,m}^2=2a_{n+1,m}h_{n,m}^2,\quad
h_{n,m+1}^2=2b_{n,m+1}h_{n,m}^2.
\]
Now, we see that for the existence of the family $h_{n,m}\ne 0$ the following compatibility condition must be satisfied
\[
a_{n+1,m}b_{n+1,m+1}=
b_{n,m+1}a_{n+1,m+1}.
\]
The latter relation can be obtained from the consistency relations \eqref{eq:3.3}--\eqref{eq:3.4}. Indeed,
it easily follows from  \eqref{eq:3.3}, \eqref{eq:3.4}, and \eqref{ForSymToEx} that
\[
\frac{a_{n+1,m+1}}{a_{n+1,m}}=
\frac{c_{n+1,m}-d_{n+1,m}}{c_{n,m}-d_{n,m}}=
\frac{c_{n,m+1}-d_{n,m+1}}{c_{n,m}-d_{n,m}}=
\frac{b_{n+1,m+1}}{b_{n,m+1}},
\]
which is exactly what we need. Hence, the table  $h_{n,m}\ne 0$ can be constructed, say,  by the initialization $h_{0,0}=1$.
Finally, noticing that
\[
\sqrt{2{a_{n,m}}}=\frac{h_{n,m}}{h_{n-1,m}}, \quad
\sqrt{2{b_{n,m}}}=\frac{h_{n,m}}{h_{n,m-1}},
\]
we get the operator
\[
\begin{split}
(\Delta_s f)_{n,m}=\sqrt{\frac{a_{n+1,m}}{2}}f_{n+1,m}+\sqrt{\frac{b_{n,m+1}}{2}}f_{n,m+1}+\frac{c_{n,m}+d_{n,m}}{2}f_{n,m}+\\
+\sqrt{\frac{a_{n,m}}{2}}f_{n-1,m}+ \sqrt{\frac{b_{n,m}}{2}}f_{n,m-1},
\end{split}
\]
which can be represented as a sum
\[
\Delta_s=J_1+J_2
\]
of two symmetric Jacobi ``matrices" of the following form
\[
(J_1f)_{n,m}=\sqrt{\frac{a_{n+1,m}}{2}}f_{n+1,m}+\frac{c_{n,m}}{2}f_{n,m}+
\sqrt{\frac{a_{n,m}}{2}}f_{n-1,m},
\]
\[
(J_2f)_{n,m}=\sqrt{\frac{b_{n,m+1}}{2}}f_{n,m+1}+\frac{d_{n,m}}{2}f_{n,m}+
\sqrt{\frac{b_{n,m}}{2}}f_{n,m-1}.
\]
Therefore, $\Delta_s$ is symmetric.
\end{proof}

\noindent \textbf{Remark.}
It should also be noted that the underlying eigenvalue problem reduces to the following one
\[
\Delta_s\pi^{(s)}(z)=z\pi^{(s)}(z),
\]
where we set
\[
\Delta_s=D^{-1}\Delta D, \quad
\pi^{(s)}(z)=D^{-1}\pi(z).
\]
In other words, the multiple orthogonal polynomials $P_{n,m}^{(s)}$ corresponding to the symmetric operator $\Delta_s$
and the multiple orthogonal polynomials $P_{n,m}$ corresponding to the monic operator $\Delta$ are related
in the following manner
\[
P_{n,m}^{(s)}(z)=\frac{1}{h_{n,m}}P_{n,m}(z).
\]
Note that we chose $h_{0,0}=1$ in order to have $P_{0,0}^{(s)}=1$.

It is also easy to give sufficient conditions for a 2D discrete Schr\"odinger operator to have eigenfunctions that consist of multiple orthogonal polynomials.

 \begin{theorem}
Let $\widetilde{\Delta}_s$ have  the form
\begin{align*}
(\widetilde{\Delta}_s f)_{n,m}=&\sqrt{\frac{a_{n+1,m}}{2}}f_{n+1,m}+ \sqrt{\frac{b_{n,m+1}}{2}}f_{n,m+1}+q_{n,m}f_{n,m}\\
&+\ \sqrt{\frac{a_{n,m}}{2}}f_{n-1,m}+ \sqrt{\frac{b_{n,m}}{2}}f_{n,m-1}.
\end{align*}
Suppose that there exist two sets of numbers
$c_{n,m}$ and $d_{n,m}$ such that
\begin{equation}\label{Potential}
q_{n,m}=\frac{c_{n,m}+d_{n,m}}{2},
\end{equation}
and the coefficients $a_{n,m}$, $b_{n,m}$, $c_{n,m}$, $d_{n,m}$ satisfy the consistency conditions \eqref{eq:3.1}--\eqref{eq:3.4} together with \eqref{ForSymToEx}.
Also assume that
\begin{equation}\label{TechCondDetS}
\frac{a_{n+1,m+1}}{a_{n+1,m}}+\frac{b_{n+1,m+1}}{b_{n,m+1}}\ne 2.
\end{equation}
Then the families $a_{n,m}$, $b_{n,m}$, $q_{n,m}$ determine the sets $c_{n,m}$, $d_{n,m}$ uniquely.
In other words, the operator $\widetilde{\Delta}_s$, the consistency conditions \eqref{eq:3.1}--\eqref{eq:3.4}, and  \eqref{ForSymToEx} uniquely define
the operators $H_1$ and $H_2$ of the form \eqref{DifOp1}--\eqref{DifOp2}. Thus, in this case, the operator $\widetilde{\Delta}_s$
generates a family of multiple orthogonal polynomials $\bigl(P_{n,m}\bigr)_{n,m=0}^{\infty}$.
\end{theorem}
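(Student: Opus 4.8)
The plan is to assemble ingredients already in place: the $6\times6$ linear‑algebra computation from the proof of the zero curvature theorem of Section~5, the symmetrization theorem of this section, and the Favard‑type Theorem~\ref{FavardMP}.

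First I would recover $c_{n,m}$ and $d_{n,m}$ from $a_{n,m},b_{n,m},q_{n,m}$. Fixing $(n,m)\in\dZ_+^2$, the Ansatz \eqref{Potential} yields the three equations \eqref{LineEqPart1}, while the consistency relations \eqref{eq:3.1}, \eqref{eq:3.3}, \eqref{eq:3.4} yield the three equations \eqref{LineEqPart2}; together these form exactly the $6\times6$ system in the unknowns $c_{n,m},d_{n,m},c_{n+1,m},d_{n+1,m},c_{n,m+1},d_{n,m+1}$ treated in the proof of the zero curvature theorem. Its determinant was shown there to equal $D_{n,m}=-4\bigl(\tfrac{a_{n+1,m+1}}{a_{n+1,m}}+\tfrac{b_{n+1,m+1}}{b_{n,m+1}}\bigr)+8$, which is nonzero precisely by hypothesis \eqref{TechCondDetS}. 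Hence the system is uniquely solvable, and $c_{n,m},d_{n,m}$ are given by the explicit formulas displayed there. Since $(n,m)$ was arbitrary, the families $c_{n,m},d_{n,m}$ are uniquely determined, and therefore so are the coefficients of the operators $H_1,H_2$ in \eqref{DifOp1}--\eqref{DifOp2}.

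Next I would exhibit the polynomials. Granting the boundary conditions \eqref{BVC_dis} (tacitly in force throughout the paper), the quadruple $a,b,c,d$ satisfies all of \eqref{eq:3.1}--\eqref{eq:3.4} together with \eqref{BVC_dis}; define $P_{n,m}$ recursively by \eqref{eq:2.6}--\eqref{eq:2.7} with $P_{0,0}=1$. The consistency conditions make the two recurrences compatible, so each $P_{n,m}$ is a monic polynomial of degree $n+m$, and Theorem~\ref{FavardMP} supplies measures $\mu_1,\mu_2$ for which the $P_{n,m}$ are type II multiple orthogonal polynomials. In particular $\pi(z)=(P_{n,m}(z))$ solves \eqref{SepCond}, hence $\Delta\pi(z)=z\pi(z)$ with $\Delta=\tfrac12(H_1+H_2)$. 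Finally, since \eqref{ForSymToEx} is assumed, the symmetrization theorem provides a nonvanishing diagonal $D$, $(Df)_{n,m}=h_{n,m}f_{n,m}$ with $h_{0,0}=1$, such that $\Delta_s:=D^{-1}\Delta D$ is symmetric and has the stated shape with potential $\tfrac{c_{n,m}+d_{n,m}}{2}$; by \eqref{Potential} this is exactly $\widetilde\Delta_s$. Consequently $\widetilde\Delta_s\bigl(D^{-1}\pi(z)\bigr)=z\,D^{-1}\pi(z)$, i.e. the rescaled table $\bigl(P_{n,m}(z)/h_{n,m}\bigr)$ is a polynomial eigenvector of $\widetilde\Delta_s$ built from multiple orthogonal polynomials, which is the last assertion.

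I expect the only delicate point to be the first step: one must check that the linear system written here is literally the one appearing in the proof of the zero curvature theorem (so that its determinant is $D_{n,m}$), that \eqref{TechCondDetS} is precisely the condition $D_{n,m}\ne0$, and that \eqref{BVC_dis} is available so that Theorem~\ref{FavardMP} can be invoked; once these are granted, the remainder is a direct citation of the results already proved.
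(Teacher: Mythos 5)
Your proposal is correct and follows exactly the argument the paper intends (the paper in fact states this theorem without writing out a proof): pointwise unique recovery of $c_{n,m},d_{n,m}$ from $a_{n,m},b_{n,m},q_{n,m}$ via the $6\times 6$ linear system \eqref{LineEqPart1}--\eqref{LineEqPart2}, whose determinant $D_{n,m}$ is nonzero precisely under \eqref{TechCondDetS}, followed by an appeal to Theorem \ref{FavardMP} and to the symmetrization theorem under \eqref{ForSymToEx}. Your explicit flagging that the boundary conditions \eqref{BVC_dis} must be in force for Theorem \ref{FavardMP} is a sensible addition that the paper leaves implicit.
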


\section{Some examples}

In this section we recall a few examples of multiple orthogonal polynomials given in \cite{NA2013}, \cite{vanA2011}
to illustrate our method.

\subsection{Multiple Hermite polynomials}
Multiple Hermite polynomials $H_{n,m}$ are monic polynomials of degree $n+m$ that satisfy
the following orthogonality conditions

\[
\int_{-\infty}^\infty x^k H_{n,m}(x) e^{-x^2+c_1 x}\, dx = 0, \quad k=0,1,\ldots, n-1,
\]
\[
\int_{-\infty}^\infty x^k H_{n,m}(x) e^{-x^2+c_2 x}\, dx = 0, \quad k=0,1,\ldots, m-1,
\]
where $c_1 \ne c_2$. The corresponding recurrence relations are explicitly given as
\[
x H_{n,m}(x) = H_{n+1,m}(x) + \frac{c_1}{2}H_{n,m}(x) +
\frac{n}{2}H_{n-1,m}(x)+\frac{m}{2}H_{n,m-1}(x),
\]
\[
x H_{n,m}(x) = H_{n,m+1}(x) + \frac{c_2}{2}H_{n,m}(x) +
\frac{n}{2}H_{n-1,m}(x)+\frac{m}{2}H_{n,m-1}(x).
\]
So, in this case we have that
\[
a_{n,m}=\frac{n}{2}, \quad
b_{n,m}=\frac{m}{2}, \quad
c_{n,m}=\frac{c_1}{2}, \quad
d_{n,m}=\frac{c_2}{2}.
\]
Hence, the relation \eqref{ForSymToEx} is obviously satisfied and, therefore, we have the symmetric operator
\begin{equation}\label{Hermite2D}
\begin{split}
(\Delta_s f)_{n,m}=&\frac{\sqrt{{n+1}}}{2}f_{n+1,m}+ \frac{\sqrt{{m+1}}}{2}f_{n,m+1}+\frac{c_{1}+c_{2}}{4}f_{n,m}\\
&+\ \frac{\sqrt{{n}}}{2}f_{n-1,m}+\frac{\sqrt{{m}}}{2}f_{n,m-1}.
\end{split}
\end{equation}
From the form of this operator it is clear that one cannot uniquely reconstruct the corresponding multiple orthogonal polynomials. Indeed, we end up with the operator \eqref{Hermite2D}
if we start with any multiple Hermite polynomials $H_{n,m}=H_{n,m}(c_1',c_2')$ such that
$c_1'+c_2'=c_1+c_2$.

\subsection{Multiple Laguerre polynomials of the first kind}
These polynomials are given by the orthogonality relations
\[
\int_0^\infty x^k L_{n,m}(x) x^{\alpha_1} e^{-x}\, dx = 0 , \quad k = 0, 1, \ldots, n-1, \]
\[
\int_0^\infty x^k L_{n,m}(x) x^{\alpha_2} e^{-x}\, dx = 0 , \quad k = 0, 1, \ldots, m-1, \]
where $\alpha_1, \alpha_2 > -1$ and $\alpha_1 - \alpha_2 \notin \dZ$.
For multiple Laguerre polynomials of the first kind it is known that
\[
c_{n,m}=2n+m+\alpha_1+1,\quad d_{n,m}=n+2m+\alpha_2+1,
\]
which do not satisfy \eqref{ForSymToEx}. Thus, the underlying cross-shaped operator cannot be symmetrized.

\subsection{Multiple Meixner polynomials of the first kind}
The multiple Meixner polynomials of the first kind $M_{n,m}^{(1)}$
are the monic polynomials of degree $n+m$ for which
\[
\sum_{k=0}^\infty M_{n,m}^{(1)}(k) k^\ell \frac{(\beta)_k(c_1)^k}{k!} = 0, \quad \ell=0,1,\ldots,n-1,
\]
\[
\sum_{k=0}^\infty M_{n,m}^{(1)}(k) k^\ell \frac{(\beta)_k(c_2)^k}{k!} = 0, \quad \ell=0,1,\ldots,m-1,
\]
where $\beta > 0$ and $0 < c_1 \ne c_2 < 1$. In this case, the nearest neighbour recurrence relation takes the form
\begin{multline*}
   x M_{n,m}^{(1)}(x) = M_{n+1,m}^{(1)}(x) +\Bigg( (\beta +n+m)\frac{c_1}{1-c_1}+
   \frac{n}{1-c_1}+\frac{m}{1-c_2} \Bigg )M_{n,m}^{(1)}(x) \\
  + \frac{c_1 n}{(1-c_1)^2}(\beta+n+m-1) M_{n-1,m}^{(1)}(x)
  + \frac{c_2 m}{(1-c_2)^2}(\beta+n+m-1) M_{n,m-1}^{(1)}(x),
\end{multline*}
\begin{multline*}
   x M_{n,m}^{(1)}(x) = M_{n,m+1}^{(1)}(x) +\Bigg( (\beta +n+m)\frac{c_2}{1-c_2}+
   \frac{n}{1-c_1}+\frac{m}{1-c_2} \Bigg )M_{n,m}^{(1)}(x) \\
  + \frac{c_1 n}{(1-c_1)^2}(\beta+n+m-1) M_{n-1,m}^{(1)}(x)
  + \frac{c_2 m}{(1-c_2)^2}(\beta+n+m-1) M_{n,m-1}^{(1)}(x).
\end{multline*}
Since the relation \eqref{ForSymToEx} is true in this case, we obtain the following
2D Schr\"odinger operator
\begin{equation}\label{Meixner2D}
\begin{split}
(\Delta_s f)_{n,m}=&\frac{\sqrt{c_1(n+1)(n+m+\beta)}}{\sqrt{2}(1-c_1)}f_{n+1,m}+
\frac{\sqrt{c_2(m+1)(n+m+\beta)}}{\sqrt{2}(1-c_2)}f_{n,m+1}\\
&+ \left(\frac{(\beta +n+m)}{2}\left(\frac{c_1}{1-c_1}+\frac{c_2}{1-c_2}\right)+\frac{n}{1-c_1}+\frac{m}{1-c_2}\right)f_{n,m}\\
&+\ \frac{\sqrt{c_1n(n+m+\beta-1)}}{\sqrt{2}(1-c_1)}f_{n-1,m}+
\frac{\sqrt{c_2m(n+m+\beta-1)}}{\sqrt{2}(1-c_2)}f_{n,m-1}.
\end{split}
\end{equation}
Also, the recurrence coefficients satisfy \eqref{TechCondDetS}, that is
\begin{equation*}
\frac{a_{n+1,m+1}}{a_{n+1,m}}+\frac{b_{n+1,m+1}}{b_{n,m+1}}=
2\frac{\beta+n+m+1}{\beta+n+m}=2\left(1+\frac{1}{\beta+n+m}\right)>2.
\end{equation*}
This means that the 2D Schr\"odinger operator \eqref{Meixner2D} determines the corresponding
multiple orthogonal polynomials uniquely.

\vspace{10mm}

\end{document}